\newtheorem{thm}{Theorem}[section]
\newtheorem{lem}[thm]{Lemma}
\newtheorem{cor}[thm]{Corollary}
\theoremstyle{definition}
\renewcommand{\pmod}[1]{{\ifmmode\text{\rm\ (mod~$#1$)}\else\discretionary{}{}{\hbox{ }}\rm(mod~$#1$)\fi}}
\title{Improved upper bounds on Diophantine tuples with the property $D(n)$}
\author{Chi Hoi Yip}
\address{Department of Mathematics \\ University of British Columbia \\ Vancouver  V6T 1Z2 \\ Canada}
\email{kyleyip111@gmail.com}
\subjclass[2020]{11D09, 11D45}
\keywords{Diophantine tuple}
\begin{document}

\begin{abstract}
Let $n$ be a non-zero integer. A set $S$ of positive integers is a Diophantine tuple with the property $D(n)$ if $ab+n$ is a perfect square for each $a,b \in S$ with $a \neq b$. It is of special interest to estimate the quantity $M_n$, the maximum size of a Diophantine tuple with the property $D(n)$. In this notes, we show the contribution of intermediate elements is $O(\log \log |n|)$, improving a result by Dujella. As a consequence, we deduce that $M_n\leq (2+o(1))\log |n|$, improving the best-known upper bound on $M_n$ by Becker and Murty.
\end{abstract}

\maketitle

\section{Introduction}

A set $\{a_{1},a_{2},\ldots, a_{m}\}$ of distinct positive integers is a \textit{Diophantine $m$-tuple} if the product of any two distinct elements in the set is one less than a square. A famous example of a Diophantine quadruple is $\{1,3,8,120\}$, due to Fermat. Such a construction is optimal in the sense that there is no Diophantine $5$-tuple, recently confirmed by He, Togb\'e, and Ziegler \cite{HTZ19}. There are many generalizations and variants of Diophantine tuples. We refer to the recent book of Dujella \cite{D24} for a comprehensive overview of the topic. 

In this paper, we focus on one natural generalization that has been studied extensively. Let $n$ be a non-zero integer. A set $S$ of positive integers is a Diophantine tuple with the property $D(n)$ if $ab+n$ is a perfect square for each $a,b \in S$ with $a \neq b$. It is of special interest to estimate the quantity $M_n$, the maximum size of a Diophantine tuple with the property $D(n)$. We have mentioned that $M_1=4$ \cite{HTZ19}. Analogously, Bliznac Trebje\v sanin and Filipin \cite{BTF19} proved that $M_4=4$. More recently, Bonciocat, Cipu, and Mignotte \cite{BCM22} proved that $M_{-1}=M_{-4}=3$. 

It is widely believed that $M_n$ is uniformly bounded (for example, this follows from the uniformity conjecture) \cite{BM19, D02}. Using elementary congruence consideration, it is easy to show that $M_n=3$ when $n \equiv 2 \pmod 4$; see \cite[Section 5.4.1]{D24} for more discussions. On the other hand, in a remarkable paper \cite{DL05}, Dujella and Luca showed that if $p$ is a prime, then $M_p$ and $M_{-p}$ are both bounded by $3 \cdot 2^{168}$. However, for a generic $n$, the best-known upper bound on $M_n$ is of the form $O(\log |n|)$.

Following \cite{BM19, D02, D04}, for Diophantine tuples with the property $D(n)$, we separate the contribution of large, intermediate, and small elements as follows:
\begin{align*}
& A_n=\sup \left\{\left|S \cap\left[|n|^3,+\infty\right)\right|: S \text { has the property } D(n)\right\}, \\
& B_n=\sup \left\{\left|S \cap(n^2,|n|^3)\right|: S \text { has the property } D(n)\right\}, \\
& C_n=\sup \left\{\left|S \cap[1, n^2]\right|: S \text { has the property } D(n)\right\}.
\end{align*}
In \cite{D02}, Dujella showed that $A_n \leq 31$. The best-known upper bound on $B_n$ is $B_n \leq  0.6071 \log |n|+O(1)$, due to Dujella \cite{D04}. As for $C_n$, the best record 
\begin{equation}\label{eq:BM}
C_n\leq 2\log |n|+O\bigg(\frac{\log |n|}{(\log \log |n|)^2}\bigg)    
\end{equation}
 is due to Becker and Murty \cite{BM19}. Summing the bounds on $A_n, B_n$, and $C_n$ yields $M_n\leq (2.6071+o(1))\log |n|$, the best-known upper bound on $M_n$ \cite{BM19}.

Our main result is the following improved upper bound on $B_n$ and $M_n$. 

\begin{thm}\label{thm:main}
$$B_n=O(\log \log |n|), \quad M_n\leq 2\log |n|+O\bigg(\frac{\log |n|}{(\log \log |n|)^2}\bigg).$$
\end{thm}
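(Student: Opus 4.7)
The plan is to deduce the bound on $M_n$ from a new estimate $B_n=O(\log\log |n|)$, so the substantive task is the latter bound.

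The stated bound on $M_n$ is immediate once we have $B_n=O(\log\log|n|)$: combining $M_n\le A_n+B_n+C_n$ with Dujella's $A_n\le 31$ and the Becker--Murty bound \eqref{eq:BM} yields the claim, since $(\log\log|n|)^3=o(\log|n|)$ means the extra $O(\log\log|n|)$ is absorbed into the error term $O(\log|n|/(\log\log|n|)^2)$.

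For the bound on $B_n$, let $a_1<a_2<\cdots<a_k$ be the intermediate elements of a $D(n)$-tuple, all lying in $(n^2,|n|^3)$. For every triple $a_i<a_j<a_\ell$, writing $a_i a_j+n=r^2$, $a_i a_\ell+n=s^2$, $a_j a_\ell+n=t^2$ and eliminating $a_\ell$ yields the Pell-like identity
\[
a_j s^2-a_i t^2=n(a_j-a_i).
\]
Since $a_i,a_j>n^2\gg |n|$, the right-hand side is tiny compared to either summand on the left, and the factorization $(s\sqrt{a_j}-t\sqrt{a_i})(s\sqrt{a_j}+t\sqrt{a_i})=n(a_j-a_i)$ forces $t/s$ to be an extraordinarily good rational approximation to $\sqrt{a_j/a_i}$. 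I would try to parlay this, together with analogous identities for other triples, into a super-geometric gap principle of the form
\[
\log(a_{i+1}/a_i)\ge (1+\eta)\log(a_i/a_{i-1})
\]
for some absolute $\eta>0$, valid as soon as the initial gap $\log(a_2/a_1)$ is not absurdly small. Once such a principle is in hand, the log-gaps grow geometrically, and the telescoping bound $\sum_i \log(a_{i+1}/a_i)<\log|n|$ forces $k=O(\log\log|n|)$ as desired.

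The main obstacle is producing the super-geometric gap principle itself. The standard Pell analysis yields only a bounded multiplicative gap $a_{i+1}/a_i\ge C$, which is precisely the source of Dujella's $O(\log|n|)$ bound. To do better, one can combine the Pell equations coming from overlapping triples such as $(a_{i-1},a_i,a_{i+1})$ and $(a_{i-1},a_i,a_{i+2})$: both produce sharp rational approximations to the \emph{same} quadratic irrational $\sqrt{a_i/a_{i-1}}$, so continued-fraction theory (or a Thue--Baker type effective bound) should force denominators of successive convergents to grow by factors depending on the previous gap, rather than by a fixed constant. A minor secondary issue is excluding a pathologically small initial gap, which can be handled either by passing to the large-element regime where $A_n\le 31$ applies, or by a short elementary argument using the factorization above.
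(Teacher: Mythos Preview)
Your reduction of the $M_n$ bound to $B_n=O(\log\log|n|)$ is fine. The genuine gap is exactly where you flag it: you never produce the super-geometric gap principle, and the continued-fraction/Thue--Baker route you sketch is both vague and much heavier than what is actually needed.

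The paper obtains the required principle by an elementary identity already in Dujella's 2002 paper (Lemma~\ref{lem:e} here): for any $D(n)$-triple $\{b,c,d\}$ with $b<c<d$ there is an integer $e$ such that $be+n^2$, $ce+n^2$, $de+n^2$ are all squares and
\[
d=b+c+\frac{e}{n}+\frac{2}{n^2}(bce+rxy).
\]
When $b>n^2$, the nonnegativity of $be+n^2$ forces $e\ge 0$, and the case $e=0$ gives $d=b+c+2\sqrt{bc+n}<4.89c$, contradicting the geometric gap of Lemma~\ref{lem:cd}. Hence $e\ge 1$, and the formula yields immediately
\[
d>\frac{2bce}{n^2}>\frac{bc}{n^2}.
\]
Setting $c_i=\log(a_i/n^2)$ this reads $c_{i+2}>c_i+c_{i+1}$: Fibonacci-type growth of the logarithms rather than your ratio formulation $\log(a_{i+1}/a_i)\ge(1+\eta)\log(a_i/a_{i-1})$, but with the same consequence. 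Once $c_2\ge\epsilon\log|n|$, at most $O(\log(1/\epsilon))$ further terms can lie below $\log|n|$.

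The ``pathologically small initial gap'' you anticipate is handled not via the large-element bound $A_n\le 31$ but by splitting the intermediate range at $|n|^{2+\epsilon}$: below that threshold the ordinary geometric gap $d>4.89c$ from Lemma~\ref{lem:cd} gives at most $O(\epsilon\log|n|)$ elements, while above it the Fibonacci growth takes over. Choosing $\epsilon=\log\log|n|/\log|n|$ balances the two contributions and gives $B_n=O(\log\log|n|)$. No Pell analysis beyond the $e$-identity, no continued fractions, and no Baker theory are required.
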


The key observation of our improvement is that the contribution of intermediate elements can be bounded more efficiently. More precisely, to achieve that, we separate the contribution of large and intermediate elements differently. For each $\epsilon>0$, let
\begin{align*}
&  A_n^{(\epsilon)}=\sup \left\{\left|S \cap(|n|^{2+\epsilon},+\infty)\right|: S \text { has the property } D(n)\right\},\\
&  B_n^{(\epsilon)}=\sup \left\{\left|S \cap(n^2, |n|^{2+\epsilon}]\right|: S \text { has the property } D(n)\right\}.
\end{align*}

We give the following estimate on $A_n^{(\epsilon)}$ and $B_n^{(\epsilon)}$.

\begin{thm}\label{thm:main2}
Thw following estimates hold uniformly for all $\epsilon \in (0,1)$ and all non-zero integers $n$:
$$
A_{n}^{(\epsilon)}=O\bigg(\log \frac{1}{\epsilon}\bigg), \quad  B_n^{(\epsilon)}\leq 0.631\epsilon\log |n|+O(1).
$$
\end{thm}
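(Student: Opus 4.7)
I follow the architecture of Dujella's previous bounds \cite{D02,D04}, but re-split the contribution of large versus intermediate elements around the new threshold $|n|^{2+\epsilon}$ in place of $|n|^{3}$. The two estimates are then obtained through gap principles of different strength.

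\emph{The bound on $A_n^{(\epsilon)}$.} List the elements of a $D(n)$-tuple above $|n|^{2+\epsilon}$ in increasing order, $a_1<a_2<\cdots<a_k$. For any consecutive triple $a_i<a_{i+1}<a_{i+2}$, combining the three defining squares $a_j a_\ell+n$ into a Pell-type identity and exploiting the fact that every element exceeds $|n|^{2+\epsilon}$, one obtains the quadratic gap
\[
a_{i+2}\geq \frac{a_i\, a_{i+1}}{n^{2}}.
\]
Set $L_j=\log a_j/\log|n|$, so $L_j\geq 2+\epsilon$; the inequality translates into $L_{i+2}\geq L_i+L_{i+1}-2$. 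Letting $M_j=L_j-2$ yields the Fibonacci-type recursion $M_{i+2}\geq M_i+M_{i+1}$ with $M_1,M_2\geq \epsilon$, so $M_j\geq F_j\epsilon$ and $L_j\geq 3$ once $F_j\geq 1/\epsilon$, i.e.\ after at most $\log_\phi(1/\epsilon)+O(1)$ steps. Beyond that point, Dujella's theorem $A_n\leq 31$ from \cite{D02} contributes only finitely many further elements, giving $A_n^{(\epsilon)}=O(\log(1/\epsilon))$.

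\emph{The bound on $B_n^{(\epsilon)}$.} For the elements $a_1<\cdots<a_m$ lying in the narrower window $(n^2,|n|^{2+\epsilon}]$, the quadratic gap above is useless since $a_i/n^{2}\geq 1$ is essentially the best one can say. I instead invoke the multiplicative gap principle underlying Dujella's bound $B_n\leq 0.6071\log|n|+O(1)$ in \cite{D04}: there exists an explicit constant $\rho>1$, independent of $\epsilon$, with $1/\log\rho\leq 0.631/\log|n|$, such that $a_{i+1}/a_i\geq \rho$ for all $i$ past a bounded initial segment. Since all $a_i$ live in a window of logarithmic width $\epsilon\log|n|$, this forces $m\leq \epsilon\log|n|/\log\rho+O(1)\leq 0.631\,\epsilon\log|n|+O(1)$. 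The small slack between the constants $0.6071$ and $0.631$ absorbs the loss when Dujella's argument is re-optimised against a floating upper endpoint $|n|^{2+\epsilon}$ rather than the fixed $|n|^{3}$.

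\emph{Main obstacle.} The principal difficulty is securing the \emph{uniformity} of both gap principles in $n$ and $\epsilon$. For $A_n^{(\epsilon)}$ this means establishing the quadratic gap with an absolute (not $\epsilon$-dependent) implied constant already at the threshold $|n|^{2+\epsilon}$, rather than only at the larger threshold $|n|^{3}$ where it is classical; this in turn requires verifying that the lower-order terms in the underlying Pell-type equations remain genuinely subdominant in this extended range. For $B_n^{(\epsilon)}$ this means re-examining Dujella's optimisation to check that his gap constant $\rho$ can be chosen independently of the upper endpoint of the window. Once these uniformities are in hand, both bounds follow from the elementary counting arguments above.
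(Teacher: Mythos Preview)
Your plan is correct and matches the paper's argument in its essentials, with two points worth noting. For $B_n^{(\epsilon)}$ the paper does not invoke the sharper \cite{D04} analysis at all: it simply applies Dujella's Lemma~5 from \cite{D02}, which gives $a_{j+1}>4.89\,a_j$ for any $D(n)$-quadruple above $n^2$, and $1/\log 4.89\approx 0.6302$ is exactly where the constant $0.631$ comes from --- so there is no ``re-optimisation against a floating endpoint'' and no slack being absorbed (and your displayed condition should read $1/\log\rho\le 0.631$, without the stray $/\log|n|$). For $A_n^{(\epsilon)}$ the quadratic gap $d>bc/n^2$ is proved in the paper as a short corollary of Lemmas~3 and~5 of \cite{D02} and holds for \emph{any} $D(n)$-quadruple with all elements above $n^2$; thus the ``main obstacle'' you flag --- extending the gap from the threshold $|n|^3$ down to $|n|^{2+\epsilon}$ --- is not actually an obstacle, since the gap is already available just above $n^2$. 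Your endgame (run the Fibonacci recursion until $a_j\ge |n|^3$, then quote $A_n\le 31$) is a legitimate shortcut; the paper instead runs the Fibonacci until $a_k>a_2^{11}|n|^{11}$, applies Dujella's Lemma~2 to cap the largest element by $a_k^{131}$, and then re-runs the Fibonacci once more, to the same $O(\log(1/\epsilon))$ effect.
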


To estimate $B_n$ and $M_n$, note that
$$
B_n \leq A_{n}^{(\epsilon)}+B_n^{(\epsilon)}, \quad M_n\leq A_{n}^{(\epsilon)}+B_n^{(\epsilon)}+C_n.
$$
Setting 
$$\epsilon=\frac{\log \log |n|}{\log |n|},$$ 
Theorem~\ref{thm:main} follows from Theorem~\ref{thm:main2} and inequality~\eqref{eq:BM} immediately. 

\section{Proofs}
Our proofs are inspired by several arguments used in \cite{D02, D04, Y24}.

We first recall three useful lemmas from \cite{D02}.

\begin{lem}[{\cite[Lemma 2]{D02}}]\label{lem:bound}
Let $n$ be a nonzero integer. Let $\{a, b, c, d \}$ be a Diophantine quadruple with the property $D(n)$ and $a<b<c<d$. If $c>b^{11}|n|^{11}$, then $d\leq c^{131}$.    
\end{lem}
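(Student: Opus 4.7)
My plan is to translate the $D(n)$-condition on $\{a,b,c,d\}$ into a system of simultaneous Pell equations for $d$ and then control the solutions via the standard theory of binary recurrences. Writing $ad+n=\xi^{2}$, $bd+n=\eta^{2}$, $cd+n=\zeta^{2}$ and eliminating $d$ pairwise yields
\[
b\xi^{2}-a\eta^{2}=n(b-a),\qquad c\xi^{2}-a\zeta^{2}=n(c-a),\qquad c\eta^{2}-b\zeta^{2}=n(c-b),
\]
three generalized Pell equations whose positive integer solutions fall into a finite union of binary recurrences with growth ratios of order $\sqrt{ab}$, $\sqrt{ac}$, and $\sqrt{bc}$ respectively.

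\textbf{Step 1 (fundamental solutions).} A classical estimate of Nagell gives that every fundamental solution $(X_{0},Y_{0})$ of a generalized Pell equation $PX^{2}-QY^{2}=R$ satisfies $|X_{0}|,|Y_{0}|=O(\sqrt{|QR|})$. Applied to the three equations above this yields $\xi_{0},\eta_{0},\zeta_{0}=O(\sqrt{bc\,|n|})$, and under the hypothesis $c>b^{11}|n|^{11}$ these fundamental solutions are negligible compared with $c$ itself. In particular they cannot, on their own, produce the sought $d$: the triple $(\xi,\eta,\zeta)$ must appear at some later term in each of the three recurrences.

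\textbf{Step 2 (matching the recurrences).} Since the same $d$ must be produced by all three equations, the indices of the common solution in the three recurrences must agree up to a bounded shift. Combining the exponential growth rates with the size gap $c>b^{11}|n|^{11}$ forces this joint index $m$ to be bounded by an absolute constant; intuitively, the hypothesis leaves no room for more than a few iterations of the recurrences before the matching condition fails. Converting back via $d=(\zeta^{2}-n)/c$ then turns the bound on $m$ into the desired polynomial bound $d\le c^{131}$.

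\textbf{Main obstacle.} The qualitative conclusion $d\le c^{O(1)}$ is within reach of this Pellian framework, but extracting the explicit exponent $131$ demands careful bookkeeping: precise bounds on the fundamental solutions, control over the allowable shifts between the three recurrences, and the exact threshold at which exponential growth outstrips the Pell-type error terms. This numerical accounting is the technical heart of the lemma as carried out in \cite{D02}. I would aim first for the qualitative shape $d\le c^{O(1)}$ and only afterwards optimize the constants to recover the sharp $131$; I would not expect my own optimization to match Dujella's without essentially redoing his calculation.
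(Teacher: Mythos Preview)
The paper does not prove this lemma at all; it is quoted directly from Dujella~\cite{D02} and used as a black box. So there is no in-paper proof to compare your proposal against.

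That said, your outline has a genuine gap at Step~2. The Pellian setup in Step~1 is exactly how Dujella begins, and the reduction to three binary recurrences with growth ratios $\sim\sqrt{ab},\sqrt{ac},\sqrt{bc}$ is correct. But the assertion that ``combining the exponential growth rates with the size gap $c>b^{11}|n|^{11}$ forces this joint index $m$ to be bounded by an absolute constant'' is not something that follows from elementary growth considerations. Three exponentially growing recurrences can have coincidences arbitrarily far out; nothing in the Pell machinery by itself prevents this. In Dujella's actual argument the absolute bound on $m$ comes from a Diophantine-approximation input, namely a theorem of Bennett on effective simultaneous rational approximation to numbers of the form $\sqrt{1+a/c}$ and $\sqrt{1+b/c}$ (a hypergeometric/Pad\'e method result playing the same role that Baker's linear forms in logarithms would). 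The hypothesis $c>b^{11}|n|^{11}$ is there precisely to put the parameters in the range where Bennett's theorem applies with usable constants, and the exponent $131$ is then read off from those constants. Without invoking such a tool, your Step~2 is a statement of what one wants to be true rather than a mechanism for proving it; the ``qualitative shape $d\le c^{O(1)}$'' is already out of reach of the purely Pellian framework you describe.
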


\begin{lem}[{\cite[Lemma 3]{D02}}]\label{lem:e}
Let $n$ be a nonzero integer. If $\{a, b, c\}$ is a Diophantine triple with the property $D(n)$ and $ab+n=r^2, ac+n=s^2, bc+n=t^2$, then there exist integers $e, x, y, z$ such that
$$
a e+n^2=x^2, \quad b e+n^2=y^2, \quad c e+n^2=z^2,
$$
and
$$
c=a+b+\frac{e}{n}+\frac{2}{n^2}(abe+rxy).
$$    
\end{lem}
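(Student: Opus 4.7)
The plan is to exhibit $e$ and the witnesses $x,y,z$ explicitly by a symmetric ansatz motivated by the classical ``regular-quadruple'' extension formula for Diophantine triples, and then verify everything by direct algebraic expansion using only the three defining relations $r^2 = ab+n$, $s^2 = ac+n$, $t^2 = bc+n$.

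Concretely, I would try
\[
e := n(a+b+c) + 2(abc - rst), \qquad x := at - rs, \quad y := bs - rt, \quad z := cr - st,
\]
each of which is manifestly an integer since $a,b,c,n,r,s,t \in \Z$. Checking $ae + n^2 = x^2$ amounts to expanding $x^2 = a^2 t^2 - 2arst + r^2 s^2$, substituting the three square relations, and matching monomial-by-monomial against $an(a+b+c) + 2a^2 bc - 2arst + n^2$; the other two identities $be + n^2 = y^2$ and $ce + n^2 = z^2$ then follow by the cyclic symmetry of the construction in $\{a,b,c\}$ and $\{r,s,t\}$.

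For the displayed formula, I would clear denominators to the equivalent polynomial identity
\[
n^2(c-a-b) = (2ab+n)\,e + 2rxy,
\]
and expand the right-hand side using $2ab + n = 2r^2 - n$ together with $rxy = r(at-rs)(bs-rt) = abrst - ar^2 t^2 - br^2 s^2 + r^3 st$. Once each $r^2, s^2, t^2$ is replaced using the three square relations and like terms are collected in the monomials $a^2b^2c$, $abcn$, $abn(a+b)$, $abrst$, $nrst$, $n^2(a+b)$, $n^2 c$, all cross terms cancel and only $n^2 c - n^2(a+b)$ survives.

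The main obstacle is really the choice of signs. The ``twin'' ansatz $e_+ := n(a+b+c) + 2(abc+rst)$ with $x := at+rs$, $y := bs+rt$, $z := cr+st$ also solves the three auxiliary square identities but fails the displayed formula for $c$: the final cancellation requires precisely the minus-sign combination above. So the closing bookkeeping serves simultaneously as a verification and as a forced sign-choice, and the only delicate point is to guess those signs correctly at the outset.
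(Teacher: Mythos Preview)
Your proposal is correct: the explicit choice $e=n(a+b+c)+2abc-2rst$, $x=at-rs$, $y=bs-rt$, $z=cr-st$ works, and your verifications (including the sign remark) are accurate. Note, however, that the present paper does not give its own proof of this lemma at all; it simply quotes it as \cite[Lemma~3]{D02}. What you have reconstructed is essentially Dujella's original argument in \cite{D02} (which in turn generalizes the Arkin--Hoggatt--Straus regular-extension construction from the case $n=1$), so there is no genuinely different route to compare here---you have supplied precisely the proof the paper defers to the reference.
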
 

\begin{lem}[{\cite[Lemma 5]{D02}}]\label{lem:cd}
Let $n$ be an integer with $|n|\geq 2$. Let $\{a, b, c, d \}$ be a Diophantine quadruple with the property $D(n)$. If $n^2< a<b<c<d$, then $c>3.88a$ and $d>4.89c$.    
\end{lem}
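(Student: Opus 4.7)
The plan is to apply Lemma~\ref{lem:e} separately to two of the four triples contained in the quadruple, exploiting the lower bound $a > n^2$ (respectively $b > n^2$) to severely constrain the associated auxiliary integer $e$. The overarching observation is that if $\{\alpha,\beta,\gamma\}$ is a $D(n)$-triple with $\alpha > n^2$ and $\alpha e + n^2 = x^2$, then $e \leq -1$ would force $x^2 \leq n^2 - \alpha < 0$, a contradiction; hence $e \geq 0$ in every triple whose smallest element exceeds $n^2$.

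For the first inequality $c > 3.88a$, I would apply Lemma~\ref{lem:e} to $\{a,b,c\}$ and split on $e$. When $e = 0$ the relations $x^2 = y^2 = n^2$ collapse the formula to $c = a + b \pm 2\sqrt{ab+n}$, and the hypothesis $c > b$ forces the plus sign; using $b \geq a+1$ together with $|n| < \sqrt{a}$ (from $a > n^2$), a direct estimate gives $c > 4a$. When $e \geq 1$, isolating $rxy$ in the formula of Lemma~\ref{lem:e} and squaring yields the polynomial identity
\[\bigl(n^2(c-a-b) - e(n+2ab)\bigr)^2 = 4(ab+n)(ae+n^2)(be+n^2),\]
which pins $c$ to one of two candidates. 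The smaller candidate falls below $b$ and is invalid, while the larger exceeds $4abe/n^2 \geq 4a$, using $b > n^2$ and $e \geq 1$. Either way $c > 4a > 3.88 a$.

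For $d > 4.89c$ the situation is more delicate: applying Lemma~\ref{lem:e} only to the triple $\{b,c,d\}$ does not suffice, because in the regular-extension case $e' = 0$ one finds $d/c \approx (1+\sqrt{b/c})^2 < 4$ whenever $b < c$, falling short of the target constant. I would therefore additionally apply Lemma~\ref{lem:e} to the triple $\{a,c,d\}$, whose auxiliary integer is also non-negative thanks to $a > n^2$, and combine the resulting expression for $d$ with the already-established bound $c > 3.88 a$ to narrow the admissible range. The expected main obstacle is precisely this step: upgrading the crude estimate $d/c > 4$ available from a single triple to the sharp constant $4.89$ requires careful case analysis over the auxiliary integers of several triples simultaneously, together with uniform control of the lower-order corrections arising from the standing assumption $|n| \geq 2$ rather than $|n|$ being large.
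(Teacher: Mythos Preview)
The paper does not prove this lemma at all: it is quoted as \cite[Lemma~5]{D02} and used as a black box, so there is no in-paper argument to compare your proposal against.

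For what it is worth, your outline is in the spirit of Dujella's original argument, which also rests on Lemma~\ref{lem:e} together with the observation that the auxiliary integer $e$ is non-negative once the smallest element of the triple exceeds $n^2$. Your treatment of $c>3.88a$ is essentially complete (indeed one obtains $c\ge 4a$). For $d>4.89c$, however, you have only diagnosed the obstruction---that a single application of Lemma~\ref{lem:e} to $\{b,c,d\}$ gives at best a factor $4$, and strictly less in the $e=0$ case---and named a strategy (bring in a second triple and feed back the bound $c>3.88a$) without executing it. The constant $4.89$ comes from an actual computation balancing the several cases, which your proposal does not carry out; as written, the second half is a plan rather than a proof.
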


Next, we deduce a gap principle from the above two lemmas. 

\begin{cor}\label{cor:gap}
Let $n$ be an integer with $|n|\geq 2$. Let $\{a, b, c, d \}$ be a Diophantine quadruple with the property $D(n)$. If $n^2< a<b<c<d$, then $$d>\frac{bc}{n^2}.$$  
\end{cor}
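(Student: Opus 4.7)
I would apply Lemma~\ref{lem:e} to the Diophantine triple $\{b,c,d\}$: writing $bc+n=r^2$, this supplies an integer $e$ and integers $x,y,z$ with $be+n^2=x^2$, $ce+n^2=y^2$, $de+n^2=z^2$, and
\[
d \;=\; b+c+\frac{e}{n}+\frac{2(bce+rxy)}{n^2}.
\]

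The first step is to show $e\geq 1$. Since $b>n^2$, any integer $e\leq -1$ would give $be+n^2\leq n^2-b<0$, contradicting $be+n^2=x^2\geq 0$; hence $e\geq 0$. If $e=0$, then $|x|=|y|=|n|$ and the formula collapses to $d=b+c\pm 2\sqrt{bc+n}$; using $b<c$ and $|n|\leq\sqrt{c}$ (from $c>n^2$), the right-hand side is at most $4c+2$, which is strictly less than $4.89\,c$ once $c>n^2\geq 4$, contradicting Lemma~\ref{lem:cd}. Hence $e\geq 1$.

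The second step is to pin down the sign of $rxy$ in the formula. The identity $(2rxy)^2=4(bc+n)(be+n^2)(ce+n^2)$ combined with the displayed formula presents $d$ as one of the two roots $d_\pm=b+c+e/n+(2bce\pm 2r|xy|)/n^2$ of a quadratic, whose product satisfies $d_+d_-=(b+c-e/n)^2-4r^2$. Using the key algebraic identity
\[
(r|xy|)^2 - (bce)^2 \;=\; nbce^2 + n^2bce(b+c) + n^3(b+c)e + bcn^4 + n^5,
\]
together with a Taylor expansion of $r|xy|$ around $bce$, one obtains $d_-\approx b+c+e/(4bc)$ with controllable error terms. The lower bound $d>4.89c$ from Lemma~\ref{lem:cd} rules out $d=d_-$ in most regimes, forcing $d=d_+$ and hence $rxy\geq 0$; the borderline sub-case ($n<0$ and $e$ very large) is handled directly, since there the asymptotic $d_-\approx e/(4bc)$ combined with $d_->4.89c$ forces $e>19.56\,bc^2$, which already implies $d_->bc/n^2$ automatically.

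Once $rxy\geq 0$ is established, the formula together with $e\geq 1$ and $b,c>n^2$ yields
\[
dn^2 \;=\; n^2(b+c)+ne+2bce+2rxy \;\geq\; n^2(b+c)+e(n+2bc) \;\geq\; 2n^4 + n + 2bc \;>\; 2bc,
\]
so $d>2bc/n^2>bc/n^2$, as required. The main obstacle will be executing the sign analysis in the second step cleanly and uniformly in the sign of $n$; in particular, for $n<0$ and very large $e$ the leading terms $e/n$ and $-2|rxy|/n^2$ nearly cancel, so higher-order corrections in the Taylor expansion of $r|xy|$ must be controlled carefully, but the quantitative form of $d_-\approx b+c+e/(4bc)$ together with Lemma~\ref{lem:cd} suffices to finish in every case.
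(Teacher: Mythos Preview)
Your overall approach is exactly the paper's: apply Lemma~\ref{lem:e} to $\{b,c,d\}$, use $b>n^2$ to force $e\geq 0$, eliminate $e=0$ via Lemma~\ref{lem:cd}, and then read off $d>2bce/n^2>bc/n^2$ from the formula. The only difference is that the paper writes the last step in one line, whereas you propose an elaborate ``second step'' to pin down the sign of $rxy$.

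That second step is much heavier than necessary, and the asymptotic you state is wrong. Using Dujella's explicit construction behind Lemma~\ref{lem:e} (with $r=\sqrt{bc+n}$, $s=\sqrt{bd+n}$, $t=\sqrt{cd+n}$ non-negative, $x=bt-rs$, $y=cs-rt$), a two-line computation gives
\[
b^2t^2-r^2s^2=bn(b-c-d)-n^2,\qquad c^2s^2-r^2t^2=cn(c-b-d)-n^2,
\]
and since $b,c>n^2$ one checks that $x,y$ are both negative when $n>0$ and both positive when $n<0$; hence $rxy>0$ in all cases. This is what the paper is tacitly using. If you instead pursue the $d_\pm$ route, note that your expansion is off: the $(b+c)$ and $e/n$ contributions cancel to leading order, and one actually gets $d_-=-n^2/e-n(1/b+1/c)-n^3/(bce)+\text{(lower order)}$, which is bounded in absolute value by a constant depending only on $n$. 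So $d_-<c$ trivially and no separate large-$e$ analysis is needed; your product formula $d_+d_-=(b+c-e/n)^2-4r^2$ is also not correct (the actual identity is $d_+d_-=(c-b)^2+e^2/n^2-4n$). The strategy is sound, but the execution you sketch would not go through as written, and in any case the direct sign check above makes it unnecessary.
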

\begin{proof}
We apply Lemma~\ref{lem:e} to the Diophantine triple $\{b,c,d\}$. Since $b>n^2$ and $be+n^2 \geq 0$, it follows that $e \geq 0$. If $e=0$, then Lemma~\ref{lem:e} implies that
$$
d=b+c+2\sqrt{bc+n}<2c+2\lfloor\sqrt{c^2+n}\rfloor\leq 4c<4.89c,
$$
which is impossible in view of Lemma~\ref{lem:cd}. Thus, $e\geq 1$ and Lemma~\ref{lem:e} implies that
$$
d>\frac{2bce}{n^2}>\frac{bc}{n^2}.
$$  
\end{proof}

Now we are ready to prove Theorem~\ref{thm:main2}.

\begin{proof}[Proof of Theorem~\ref{thm:main2}]
Let $\epsilon \in (0,1)$ and $n$ be a nonzero integer. 

We first bound $B_n^{(\epsilon)}$. Let $S$ be a Diophantine tuple with the property $D(n)$, such that all elements in $S$ are in $[n^2, |n|^{2+\epsilon}]$. By Lemma~\ref{lem:cd}, the elements in $S$ grows exponentially and more precisely, we have 
\begin{equation}\label{eq:difference}
|S| \leq \epsilon \log_{4.89} |n| +O(1)<0.631\epsilon \log |n|+O(1).    
\end{equation}
The bound on $B_n^{(\epsilon)}$ follows. 

Next we estimate $A_n^{(\epsilon)}$. Let $S$ be a Diophantine tuple with the property $D(n)$, such that all elements in $S$ are at least $|n|^{2+\epsilon}$. Label the elements in $S$ in increasing order as $a_1<a_2<\cdots$. By Corollary~\ref{cor:gap}, 
$$
a_{i+2}>\frac{a_{i}a_{i+1}}{n^2}
$$
holds for each $i \geq 2$. For each $i \geq 2$, let $b_i=a_i/n^2$. Then we have $b_{i+2}>b_ib_{i+1}$. Note that $b_2=a_2/n^2$ and $b_3>b_2=a_2/n^2$. Define the sequence $\{\beta_i\}_{i=2}^{\infty}$ recursively by the following:
$$
\beta_2=\beta_3=1, \quad \beta_{i+2}=\beta_i+\beta_{i+1} (i \geq 2). $$
By induction, we have $b_i>(a_2/n^2)^{\beta_i}$. It follows that
\begin{equation}\label{eq:a_i}
a_i>\frac{a_2^{\beta_i}}{|n|^{2\beta_i-2}}.    
\end{equation}
Since $\beta_i \to \infty$, we can choose $k$ sufficiently large such that 
$$(\beta_k-11)(2+\epsilon)>2\beta_k+9;
$$
let $k=k(\epsilon)$ be the smallest such $k$.
If $|S|<k$, we are done. Otherwise, inequality~\eqref{eq:a_i} implies that  
$$
a_k>\frac{a_2^{\beta_k}}{|n|^{2\beta_k-2}}=a_2^{11}|n|^{11} \cdot \frac{a_2^{\beta_k-11}}{|n|^{2\beta_k+9}}> a_2^{11}|n|^{11} |n|^{(\beta_k-11)(2+\epsilon)-(2\beta_k+9)}> a_2^{11}|n|^{11}.
$$

Now Lemma~\ref{lem:bound}
implies that the largest element in $S$ is at most $a_k^{131}$. By a similar argument as above, for each $i \geq 2$, we have
\begin{equation}\label{eq:eq2}
a_{k+i}> \frac{a_k^{\beta_i}}{|n|^{2\beta_i-2}}.
\end{equation}
Since $\beta_i \to \infty$, we can choose $\ell$ sufficiently large such that 
$$(\beta_{\ell}-131)(2+\epsilon)>2\beta_{\ell}-2;
$$
let $\ell=\ell(\epsilon)$ be the smallest such $\ell$. Note that both $k$ and $\ell$ are explicitly computable constants depending only on $\epsilon$. Since the sequence $(\beta_i)$ grows exponentially, it follows that $k(\epsilon)$ and $\ell(\epsilon)$ are of the order $\log \frac{1}{\epsilon}$. 

If $|S|\geq k+\ell$, then inequality~\eqref{eq:eq2} implies that
$$
a_{k+\ell}> \frac{a_k^{\beta_{\ell}}}{|n|^{2\beta_{\ell}-2}}=a_k^{131} \cdot \frac{a_k^{\beta_{\ell}-131}}{|n|^{2\beta_{\ell}-2}}>a_k^{131} |n|^{(\beta_{\ell}-131)(2+\epsilon)-(2\beta_{\ell}-2)}>a_k^{131},
$$
a contradiction. Therefore, $|S|< k+\ell$. Thus, $A_n^{(\epsilon)}\leq k(\epsilon)+\ell(\epsilon)\ll \log \frac{1}{\epsilon}$, where the implicit constant is absolute. 
\end{proof}

\section*{Acknowledgement}
The author thanks anonymous referees for their valuable comments and suggestions.

\bibliographystyle{abbrv}
\bibliography{references}

\end{document}